\newtheorem{theo}{Theorem}[section]
\newtheorem{lemm}[theo]{Lemma}
\newtheorem{defi}[theo]{Definition}
\newtheorem{prop}[theo]{Proposition}
\newtheorem{rema}[theo]{Remark}
\numberwithin{equation}{section}
\newcommand{\bal}{\begin{align}}
\newcommand{\bbal}{\begin{align*}}
\newcommand{\bes}{\begin{split}}
\newcommand{\ees}{\end{split}}
\newcommand{\beq}{\begin{equation}}
\newcommand{\eeq}{\end{equation}}
\newcommand{\bca}{\begin{cases}}
\newcommand{\eca}{\end{cases}}
\newcommand{\pa}{\partial}
\newcommand{\na}{\nabla}
\newcommand{\De}{\dot{\Delta}}
\newcommand{\de}{\delta}
\newcommand{\cd}{\cdot}
\newcommand{\ep}{\varepsilon}
\newcommand{\dd}{\mathrm{d}}
\newcommand{\B}{\dot{B}}
\newcommand{\R}{\mathbb{R}}
\newcommand{\D}{\mathrm{div}}
\newcommand{\PP}{\mathcal{P}}
\newcommand{\Q}{\mathcal{Q}}
\newcommand{\Z}{\mathbb{Z}}
\begin{document}

\subjclass[2010]{76W05}
\keywords{compressible MHD system, global solution.}

\title[compressible magnetohydrodynamic system]{On some large global solutions for the compressible magnetohydrodynamic system}

\author[J. Li]{Jinlu Li}
\address{School of Mathematics and Computer Sciences, Gannan Normal University, Ganzhou 341000, China}
\email{lijinlu@gnnu.cn}

\author[Y. Yu]{Yanghai Yu}
\address{School of Mathematics and Statistics, Anhui Normal University, Wuhu, Anhui, 241002, China}
\email{yuyanghai214@sina.com}

\author[W. Zhu]{Weipeng Zhu}
\address{Department of Mathematics, Sun Yat-sen University, Guangzhou, 510275, China}
\email{mathzwp2010@163.com}

\begin{abstract}
In this paper we consider the global well-posedness of compressible magnetohydrodynamic system in $\R^d$ with $d\geq2$, in the framework of the critical Besov spaces. We can show that if the initial data, the shear viscosity and the magnetic diffusion coefficient are small comparing with the volume viscosity, then compressible magnetohydrodynamic system has a unique global solution.
\end{abstract}

\maketitle

\section{Introduction}
The present paper is devoted to the equations of magnetohydrodynamics (MHD) which describe the motion of electrically conducting fluids in the presence of a magnetic field. The barotropic compressible magnetohydrodynamic system can be written as
\beq\label{1.1}\bca
\pa_t\rho+\D(\rho u)=0,\; &\mathrm{in}\quad \R^+\times\R^d\\
\pa_t(\rho u)+\D(\rho u\otimes u)+\na P(\rho)=b\cd \na b-\frac12\na(|b|^2)+\mu \Delta u
\\ \qquad+\na ((\mu+\lambda)\D u),\; &\mathrm{in}\quad \R^+\times\R^d\\
\pa_tb+(\D u)b+u\cd \na b-b\cd \na u-\nu\Delta b=0,\; &\mathrm{in}\quad \R^+\times\R^d\\
\D b=0,\; &\mathrm{in}\quad \R^+\times\R^d
\eca\eeq
where $\rho=\rho(t, x)\in \R^+$ denotes the density, $u=u(t,x)\in\R^d$ and $b=b(t,x)\in\R^d$ stand for the velocity field and the magnetic field, respectively. The barotropic assumption means that the pressure $P=P(\rho)$ is given and assumed to be strictly increasing. The constant $\nu > 0$ is the resistivity acting as the magnetic diffusion coefficient of the magnetic field. The shear and volume viscosity coefficients $\mu$ and $\lambda$ are constant and fulfill the standard strong parabolicity assumption:
\beq
\mu>0, \qquad \kappa=\lambda+2\mu>0.
\eeq
To complete the system (1.1), the initial data are supplemented by
\bal\label{1.2}
(u,b,\rho)(t,x)|_{t=0}=(u_{0}(x),b_{0}(x),\rho_{0}(x))
\end{align}
and also, as the space variable tends to infinity, we assume
\bal\label{1.3}
\lim\limits_{|x|\to\infty}\rho_0(x)=1.
\end{align}
The system of MHD involves various topics such as the evolution and dynamics of astrophysical objects, thermonuclear fusion, metallurgy and semiconductor crystal growth, see for example \cite{Cabannes1970,Landau1960}. Roughly speaking, The system \eqref{1.1} is a coupling between the compressible Navier-Stokes equations with the magnetic equations (heat equations). On the other hand, notice that $b \equiv 0$, system \eqref{1.1} reduces to the usual compressible Navier-Stokes system for baratropic fluids. Due to its physical importance, complexity, rich phenomena and mathematical challenges, there have been huge literatures on the study of the compressible MHD problem \eqref{1.1} by many physicists and mathematicians, see for example, \cite{Cabannes1970,Chen1 2002,Chen2 2003,Ducomet2006,Fan1 2007,Fan2 2008,Fan3 2009,Hu1 2008,Hu2 2010,Kawashima1984,Landau1960,Strohmer1990,Umeda 1984,vol 1972,Wang 2010,Zhang 2010} and the references therein. Now, we briefly recall some results concerned
with the multi-dimensional compressible MHD equations in the absence of vacuum, which are more relatively with our problem.  Kawashima \cite{Kawashima1984} established the local and global well-posedness of the solutions to
the compressible MHD equations as the initial density is strictly positive, see also Vol'pert-Khudiaev \cite{Vol1972} and Strohmer \cite{Strohmer1990} for the local existence results. To catch the scaling invariance property, Danchin first introduce in his series papers \cite{Danchin 2000,Danchin1 2001,Danchin2 2001,Danchin2007,Danchin2016} the ``Critical Besov Spaces" which were inspired by those efforts on the incompressible Navier-Stokes. Recently, Danchin et.al prove that the compressible Navier-Stokes system convergence to the homogeneous incompressible case for the large volume viscosity in \cite{Danchin 2017}. Motivated this, our main goal of the present paper is devoted to extend the compressible Navier-Stokes system to the MHD system. That is, we will prove the global existence of strong solutions to \eqref{1.1} for a class of large initial data. We notice that if $\kappa$ tends to $+\infty$, then velocity field and magnetic field  will satisfy the incompressible MHD system:
\beq\label{1.4}\bca
\pa_tU+U\cdot\na U-\mu \Delta U+\na \Pi-B\cd \na B-\frac12\na(|B|^2)=0,\\
\pa_tB+U\cd \na B-B\cd \na U-\nu \Delta B=0,\\
\D U=\D B=0, \qquad (U,B)|_{t=0}=(U_0,B_0)
\eca\eeq
with $U_0=\PP u_0$ and $B_0=b_0$. Here, the projectors $\PP$ and $\Q$ are defined by
$$\PP:=\mathrm{Id}+(-\Delta)^{-1}\na \D, \qquad \Q:=-(-\Delta)^{-1}\na \D.$$

Our main result can state be stated as follows:
\begin{theo}\label{th1}
Assume that $d\geq 2$, $u_0\in \B^{\frac d2-1}_{2,1}(\R^d)$ and $a_0:=\rho_0-1\in \B^{\frac d2-1}_{2,1}(\R^d)\cap \B^{\frac d2}_{2,1}(\R^d)$. Suppose that \eqref{1.4} generates a unique global solution $(U,B)\in \mathcal{C}(\R^+;\B^{\frac d2-1}_{2,1}(\R^d))$ satisfying $U_0:=\PP u_0$ and $B_0=b_0$. Let $C$ be a large universal constant and denote
\bal\bes\label{1.5}
&M:=||U,B||_{L^\infty(\R^+;\B^{\frac d2-1}_{2,1})}+||U_t,B_t,\mu\na^2 U,\nu\na^2 B||_{L^1(\R^+;\B^{\frac d2-1}_{2,1})},
\\&D_0:=Ce^{C(1+\mu^{-1}+\nu^{-1})(M+1)^2}\big(||a_0,\Q v_0||_{\B^{\frac d2-1}_{2,1}}+\kappa||a_0||_{\B^{\frac d2}_{2,1}}+1\big), \quad
\\&\de_0:=Ce^{2C(1+\mu^{-2}+\nu^{-2})(M+1)^2}\big(\kappa^{-1}D^2_0+\kappa^{-\frac12}D_0\big).
\ees\end{align}
If $\kappa$ is large enough and $||a_0||_{\B^{\frac d2}_{2,1}}$ is small enough such that
$$\kappa^{-1}D_0\ll1, \quad \de_0(\frac1\mu+\frac1\nu+1)\leq \frac12, $$
then \eqref{1.1} has a unique global-in-time solution $(\rho,u,b)$ which satisfies
\bal\label{1.6}\bes
&u,b\in \mathcal{C}(\R^+;\B^{\frac d2-1}_{2,1})\cap L^1(\R^+;\B^{\frac d2+1}_{2,1}),\\&
a:=\rho-1\in C(\R^+;\B^{\frac d2-1}_{2,1}\cap \B^{\frac d2}_{2,1})\cap L^2(\R^+;\B^{\frac d2}_{2,1}).
\ees\end{align}
\end{theo}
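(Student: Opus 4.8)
The plan is to build $(\rho,u,b)$ as a perturbation of the triple $(1,U,B)$, taking as \emph{input} the hypothesis that \eqref{1.4} has a global solution with finite $M$ (we do not reprove it) and exploiting that the potential part of the velocity is strongly damped once $\kappa$ is large. Set $a:=\rho-1$ and rewrite \eqref{1.1} as a transport equation $\pa_t a+u\cd\na a+(1+a)\D u=0$, a momentum equation of the form $\pa_t u+u\cd\na u-\fr{1}{1+a}(\mu\Delta u+\na((\mu+\la)\D u))+P'(1)\na a=F(a,u,b)$, where $F$ collects the quadratic pressure remainder and the Lorentz force $\fr{1}{1+a}(b\cd\na b-\fr12\na|b|^2)$, and the magnetic equation for $b$ (with $\D b=0$ preserved). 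Applying the Helmholtz projectors, decompose $u=\PP u+\Q u$ and introduce the perturbation unknowns $v:=\PP u-U$ (divergence free, with $v|_{t=0}=0$ since $U_0=\PP u_0$), the potential part $\Q u$, and $c:=b-B$ (with $c|_{t=0}=0$ since $B_0=b_0$). Using that $\na\D\Q u=\Delta\Q u$, the pair $(a,\Q u)$ solves, modulo bilinear remainders, the parabolic--hyperbolic system $\pa_t a+\D\Q u=\cdots$, $\pa_t\Q u-\kappa\Delta\Q u+P'(1)\na a=\cdots$, while $v$ and $c$ solve heat equations with viscosities $\mu$ and $\nu$ whose right-hand sides — after cancelling $\PP(B\cd\na B)$ against the $U$-equation and the pure gradients against $\PP$ — are always bilinear in $(U,B,v,c,a,\Q u)$ with at least one factor among the ``small'' quantities $v,c,a,\Q u,\D\Q u$.

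The first and central block of the a priori estimate is the control of $(a,\Q u)$. On a time interval $[0,T]$ on which a local solution in the class \eqref{1.6} exists (local well-posedness being obtained by the now-standard Friedrichs scheme in critical Besov spaces, cf. \cite{Danchin 2000,Danchin2007}), I would apply Danchin's effective-unknown technique to the $2\times2$ system above — i.e. work with $\Q u$ together with a correction proportional to $\kappa^{-1}\na(-\Delta)^{-1}a$ so that the pressure coupling becomes lower order — combined with the transport estimate for $a$ in $\B^{\fr d2-1}_{2,1}\cap\B^{\fr d2}_{2,1}$ (whose source $(1+a)\D\Q u$ needs $\D\Q u\in L^1_T(\B^{\fr d2}_{2,1})$, itself provided by the damping). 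This yields a bound of the schematic form
\bbal
&\|a\|_{\LL^\infty_T(\B^{\fr d2-1}_{2,1}\cap\B^{\fr d2}_{2,1})}+\|a\|_{L^2_T(\B^{\fr d2}_{2,1})}+\|\Q u\|_{\LL^\infty_T(\B^{\fr d2-1}_{2,1})}+\kappa\|\Q u\|_{L^1_T(\B^{\fr d2+1}_{2,1})}\les D_0,
\end{align*}
valid as long as $\kappa^{-1}D_0\ll1$; the large viscosity produces the gains $\|\Q u\|_{L^1_T(\B^{\fr d2+1}_{2,1})}=O(\kappa^{-1})$ and $\|a\|_{L^2_T(\B^{\fr d2}_{2,1})}=O(\kappa^{-1/2})$, which is precisely the origin of the factors $\kappa^{-1}D_0^2$ and $\kappa^{-1/2}D_0$ appearing in the definition of $\de_0$ in \eqref{1.5}.

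The second block is the parabolic estimate for $v$ and $c$. Since $v|_{t=0}=c|_{t=0}=0$, their size is driven entirely by the right-hand sides: products of $(U,B)$ (bounded by $M$, including $\mu\na^2 U,\nu\na^2 B\in L^1_T(\B^{\fr d2-1}_{2,1})$) with $(a,\Q u,\D\Q u)$ (bounded by $D_0$ with the $\kappa$-gains above), the term $(\fr{1}{1+a}-1)(\mu+\la)\na\D\Q u\sim \kappa^{-1}a\cdot(\kappa\Delta\Q u)$, plus genuinely quadratic terms in $(v,c)$. Controlling all of these by the Besov product laws ($\B^{\fr d2}_{2,1}$ being an algebra acting on $\B^s_{2,1}$ for $|s|\le\fr d2$; note in particular $(\D\Q u)\,B\in L^1_T(\B^{\fr d2-1}_{2,1})$ and $B\cd\na v,\ B\cd\na\Q u\in L^1_T(\B^{\fr d2-1}_{2,1})$) gives, by the maximal-regularity estimate for the heat semigroup, $\|v,c\|_{\LL^\infty_T(\B^{\fr d2-1}_{2,1})}+\mu\|v\|_{L^1_T(\B^{\fr d2+1}_{2,1})}+\nu\|c\|_{L^1_T(\B^{\fr d2+1}_{2,1})}\les\de_0$.

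With these two blocks I would close a bootstrap: assume on $[0,T]$ that the compressible part is bounded by $2D_0$ in the norms above and that $\|v,c\|+\ldots\le2\de_0$; the smallness hypotheses $\kappa^{-1}D_0\ll1$ and $\de_0(\mu^{-1}+\nu^{-1}+1)\le\fr12$ then let me absorb every nonlinear and coupling contribution and recover the same estimates with $2D_0,2\de_0$ replaced by $D_0,\de_0$. In particular $\|a\|_{L^\infty_T(\B^{\fr d2}_{2,1})}$ stays small enough (using the smallness of $\|a_0\|_{\B^{\fr d2}_{2,1}}$) that $\rho=1+a$ is bounded away from zero, so no vacuum forms and the local solution continues; a standard continuation argument gives $T=+\infty$ and the regularity \eqref{1.6}, and uniqueness follows from a stability estimate for the difference of two solutions in $\B^{\fr d2-1}_{2,1}$ with the usual logarithmic loss treated \`a la Danchin. \textbf{The main obstacle} I anticipate is precisely the first block: extracting the sharp powers of $\kappa$ from the coupled $(a,\Q u)$ system, because the pressure term makes the low frequencies behave like a damped wave rather than a heat flow, so the effective-unknown reformulation and the frequency-by-frequency Lyapunov estimates must be carried out uniformly in $\kappa$; a secondary difficulty is bookkeeping the magnetic coupling terms $(\D u)b$ and $b\cd\na b$ against the compressible part, which forces one to propagate $\D\Q u\in L^1_T(\B^{\fr d2}_{2,1})$ wherever it multiplies $B\in\LL^\infty_T(\B^{\fr d2-1}_{2,1})$.
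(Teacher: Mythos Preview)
Your proposal is correct and follows the same architecture as the paper: perturb $(\rho,u,b)$ around $(1,U,B)$, split the velocity perturbation into potential and solenoidal parts, obtain a $D_0$-type bound on the compressible block $(a,\Q u)$ with the sharp $\kappa^{-1}$ and $\kappa^{-1/2}$ gains, feed those gains into a $\de_0$-type bound on $(\PP u-U,\,c)$ (both having zero initial data), close a bootstrap under the stated smallness, and continue the local solution to $T^*=+\infty$.

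The one technical difference worth flagging is precisely in the block you identify as the main obstacle. You propose Danchin's \emph{effective-unknown} change of variable, shifting $\Q u$ by a multiple of $\kappa^{-1}\na(-\Delta)^{-1}a$ so that the pressure coupling becomes lower order. The paper instead keeps the original unknowns and builds, after localizing with $\De_j$, a Lyapunov functional
\[
\mathcal{L}_j^2\;=\;2\|a_j\|_{L^2}^2+\|\Q v_j\|_{L^2}^2+\|\Q v_j+\kappa\na a_j\|_{L^2}^2
\]
whose cross term $2\kappa(\Q v_j,\na a_j)$ does the same decoupling job at the energy level; one then reads off the dissipation $\kappa(\|\na\Q v_j\|_{L^2}^2+\|\na a_j\|_{L^2}^2)\gtrsim\min(\kappa 2^{2j},\kappa^{-1})\mathcal{L}_j^2$, which naturally produces the low/high split at $2^j\kappa\sim1$ and yields the quantities $\|\kappa\na^2 a^\ell,\na a^h\|_{L^1_T(\B^{\frac d2-1}_{2,1})}$ that the paper carries in its $X_d(T)$. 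The two routes are equivalent in scope and both deliver the $\kappa$-scalings you state; the effective-unknown formulation is perhaps cleaner to write, while the Lyapunov formulation avoids introducing an auxiliary variable and makes the frequency threshold explicit. Finally, the paper does not redo a Friedrichs scheme but simply imports local well-posedness and uniqueness from \cite{Li 2017} and runs the continuation argument from there.
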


\begin{rema}
If $d=2$, according to Lemma \ref{pr-mhd}, we can set
$$M:=C||U_0,B_0||_{\B^0_{2,1}}\exp\Big(C(\frac{1}{\mu^4}+\frac{1}{\nu^4})||U_0,B_0||^4_{L^2}\Big).$$
From Theorem \ref{th1}, we deduce that the system \eqref{1.1} has a unique global-in-time solution without any smallness condition on the initial data. On the other hand, our result improves the the previous one due to Danchin et.al who considered the compressible Navier-Stokes system in \cite{Danchin 2017}.
\end{rema}

\section{Littlewood-Paley analysis}

In this section, we recall the Littlewood-Paley theory, the definition of homogeneous Besov spaces and some useful properties.
First, let us introduce the Littlewood-Paley decomposition. Choose a radial function $\varphi\in \mathcal{S}(\mathbb{R}^d)$ supported in $\tilde{\mathcal{C}}=\{\xi\in\mathbb{R}^d,\frac34\leq \xi\leq \frac83\}$ such that
\begin{align*}
\sum_{j\in \mathbb{Z}}\varphi(2^{-j}\xi)=1 \quad \mathrm{for} \ \mathrm{all} \ \xi\neq0.
\end{align*}
The frequency localization operator $\dot{\Delta}_j$ and $\dot{S}_j$ are defined by
\begin{align*}
\dot{\Delta}_jf=\varphi(2^{-j}D)f=\mathcal{F}^{-1}(\varphi(2^{-j}\cdot)\mathcal{F}f), \quad \dot{S}_jf=\sum_{k\leq j-1}\dot{\Delta}_kf \quad \mathrm{for} \quad j\in\mathbb{Z}.
\end{align*}
With a suitable choice of $\varphi$, one can easily verify that
\begin{align*}
\dot{\Delta}_j\dot{\Delta}_kf=0 \quad \mathrm{if} \quad |j-k|\geq2, \quad \dot{\Delta}_j(\dot{S}_{k-1}f\dot{\Delta}_kf)=0 \quad  \mathrm{if} \quad  |j-k|\geq5.
\end{align*}

Now, we will introduce the definition of the homogeneous Besov space. We denote the space $\mathcal{Z}'(\mathbb{R}^d)$ by the dual space of $\mathcal{Z}(\mathbb{R}^d)=\{f\in \mathcal{S}(\mathbb{R}^d);D^{\alpha}\hat{f}(0)=0;\ \forall \alpha\in \mathbb{N}^d\}$, which can be identified by the quotient space of $\mathcal{S}'(\mathbb{R}^d)/\mathcal{P}$ with the polynomials space $\mathcal{P}$. The formal equality $f=\sum\limits_{j\in \mathbb{Z}}\dot{\Delta}_jf$ holds true for $f\in\mathcal{Z}'(\mathbb{R}^d)$ and is called the homogenous Littlewood-Paley decomposition.

The operators $\dot{\Delta}_j$ help us recall the definition of the homogenous Besov space (see \cite{Bahouri2011})

\begin{defi}
Let $s\in \mathbb{R}$, $1\leq p,r\leq \infty$. The homogeneous Besov space $\B^s_{p,r}$ is defined by
\begin{align*}
\B^s_{p,r}=\{f\in \mathcal{Z}'(\mathbb{R}^d):||f||_{\B^s_{p,r}}<+\infty\},
\end{align*}
where
\begin{align*}
||f||_{\B^s_{p,r}}\triangleq \Big|\Big|(2^{ks}||\dot{\Delta}_k f||_{L^p})_{k}\Big|\Big|_{\ell^r}.
\end{align*}
\end{defi}

\begin{rema}\label{re1}
Let $\mathcal{C}'$ be an annulus and $(u_j)_{j\in \Z}$ be a sequence of functions such that
$$Supp\ \hat{u}_j\subset 2^j\mathcal{C}' \quad and \quad ||(2^{js}||u_j||_{L^p})_{j\in \Z}||_{\ell^r}<\infty.$$
There exists a constant $C_s$ depending on $s$ such that
$$||u||_{\B^s_{p,r}}\leq C_s||(2^{js}||u_j||_{L^p})_{j\in \Z}||_{\ell^r}.$$
\end{rema}

Next, we give the important product acts on homogenous Besov spaces by collecting some useful lemmas from \cite{Bahouri2011}.

\begin{lemm}\label{le1}
Let $s_1,s_2\leq \frac d2$, $s_1+s_2>0$ and $(f,g)\in\B^{s_1}_{2,1}(\R^d)\times\B^{s_2}_{2,1}(\R^d)$. Then we have
\begin{align*}
||fg||_{\B^{s_1+s_2-\frac d2}_{2,1}}&\leq C||f||_{\B^{s_1}_{2,1}}||g||_{\B^{s_2}_{2,1}}.
\end{align*}
\end{lemm}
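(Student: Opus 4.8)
\textbf{Plan of proof for Lemma \ref{le1}.}
The statement is the classical bilinear estimate in homogeneous Besov spaces, and the plan is to
prove it by Bony's paraproduct decomposition together with the almost-orthogonality of the
dyadic blocks. Writing $\dot T_f g$ for the paraproduct and $\dot R(f,g)$ for the remainder, I
would decompose
\begin{align*}
fg=\dot T_f g+\dot T_g f+\dot R(f,g),
\end{align*}
where $\dot T_f g=\sum_k \dot S_{k-1}f\,\dot\Delta_k g$ and
$\dot R(f,g)=\sum_{|k-k'|\leq1}\dot\Delta_k f\,\dot\Delta_{k'}g$, and then estimate the three pieces
separately in $\B^{s_1+s_2-\frac d2}_{2,1}$. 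For each piece I would localize with $\dot\Delta_j$,
use the support properties of $\varphi$ recorded after the definition of $\dot\Delta_j,\dot S_j$ to
restrict the sum over $k$ to a band near $j$, then bound the $L^2$ norm of each product block.

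For the two paraproduct terms, say $\dot T_f g$, I would use that
$\dot\Delta_j(\dot S_{k-1}f\,\dot\Delta_k g)$ vanishes unless $|j-k|\leq4$, and estimate
$\|\dot S_{k-1}f\|_{L^\infty}\|\dot\Delta_k g\|_{L^2}$ by H\"older in $L^2\times L^\infty$. The key
device here is the embedding $\B^{s_1}_{2,1}\hookrightarrow L^\infty$, valid precisely when
$s_1\leq \frac d2$ (one converts each $\dot S_{k-1}f$ into $\sum_{k'<k-1}\dot\Delta_{k'}f$ and uses
Bernstein's inequality $\|\dot\Delta_{k'}f\|_{L^\infty}\lesssim 2^{k'd/2}\|\dot\Delta_{k'}f\|_{L^2}$).
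Collecting the $2^{js}$ weights, summing the resulting convolution in $j$ over the band $|j-k|\leq4$,
and using Young's inequality for the $\ell^1$ sequences (here $r=1$ makes all series summable with
constant $C_s$) yields $\|\dot T_f g\|_{\B^{s_1+s_2-\frac d2}_{2,1}}\lesssim
\|f\|_{\B^{s_1}_{2,1}}\|g\|_{\B^{s_2}_{2,1}}$; the symmetric bound for $\dot T_g f$ uses $s_2\leq\frac
d2$ in the same way.

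The remainder term $\dot R(f,g)$ is where the condition $s_1+s_2>0$ is essential, and I expect this
to be the main obstacle. Here $\dot\Delta_j\dot R(f,g)$ receives contributions from
$\dot\Delta_k f\,\dot\Delta_{k'}g$ with $|k-k'|\le1$ and $k\gtrsim j$ (the product of two blocks at
frequency $2^k$ has frequencies $\lesssim 2^{k}$, so after $\dot\Delta_j$ only $k\gtrsim j-N_0$
survive for a fixed $N_0$). I would bound each block by
$\|\dot\Delta_k f\,\dot\Delta_{k'}g\|_{L^2}\le\|\dot\Delta_k f\|_{L^2}\|\dot\Delta_{k'}g\|_{L^\infty}
\lesssim 2^{k'd/2}\|\dot\Delta_k f\|_{L^2}\|\dot\Delta_{k'}g\|_{L^2}$ via Bernstein, extract the
weights $2^{ks_1}2^{k'(s_2)}$, and rewrite the $2^{j(s_1+s_2-\frac d2)}$ factor relative to $2^{k}$.
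Because the block sum runs over $k\geq j-N_0$, the geometric series in $k-j$ converges exactly when
the net exponent $s_1+s_2-\frac d2 - (\,-\frac d2\,)=s_1+s_2>0$, which is why the hypothesis
$s_1+s_2>0$ enters; this is precisely the high-frequency summability that fails at $s_1+s_2=0$. After
summing, Young's inequality again gives the $\ell^1$ bound with a constant depending only on $s_1,s_2$
(through the geometric ratios), and combining the three estimates completes the proof. Throughout, the
invocation of Remark \ref{re1} lets me pass from the dyadically-localized sequence bounds back to the
Besov norm, since each $\dot T_f g$, $\dot T_g f$ and, after regrouping, $\dot R(f,g)$ are spectrally
supported in suitable annuli (for the remainder, in balls, which is handled by the standard variant of
Remark \ref{re1} for ball-supported pieces).
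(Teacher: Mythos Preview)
The paper does not supply its own proof of this lemma: it is simply quoted from \cite{Bahouri2011}, and your plan via Bony's decomposition is precisely the standard argument there (Theorems~2.47, 2.52 and Corollary~2.54), so at the level of strategy there is nothing to compare.

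One step in your sketch would fail as written. For the remainder $\dot R(f,g)$ you propose to bound
$\|\dot\Delta_kf\,\dot\Delta_{k'}g\|_{L^2}\le\|\dot\Delta_kf\|_{L^2}\|\dot\Delta_{k'}g\|_{L^\infty}\lesssim 2^{k'd/2}\|\dot\Delta_kf\|_{L^2}\|\dot\Delta_{k'}g\|_{L^2}$,
but since $\dot\Delta_j$ is only bounded on $L^2$, the geometric factor left over after extracting $2^{ks_1}2^{k's_2}$ is $2^{(j-k)(s_1+s_2-\frac d2)}$, and the sum over $k\ge j-N_0$ converges only when $s_1+s_2>\frac d2$, not merely when $s_1+s_2>0$. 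The correct device is to bound the product in $L^1$ by Cauchy--Schwarz and then apply Bernstein to the \emph{outer} projector,
\[
\|\dot\Delta_j(\dot\Delta_kf\,\dot\Delta_{k'}g)\|_{L^2}\ \lesssim\ 2^{jd/2}\,\|\dot\Delta_kf\,\dot\Delta_{k'}g\|_{L^1}\ \le\ 2^{jd/2}\,\|\dot\Delta_kf\|_{L^2}\|\dot\Delta_{k'}g\|_{L^2},
\]
so that the $\frac d2$ is attached to $j$ rather than to $k$; the leftover factor is then $2^{(j-k)(s_1+s_2)}$ and summability holds exactly under the hypothesis $s_1+s_2>0$. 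Your ``net exponent'' arithmetic $s_1+s_2-\frac d2-(-\frac d2)=s_1+s_2$ already presupposes this $2^{jd/2}$, which your stated $L^2\times L^\infty$ route does not produce. (A smaller point: the embedding $\B^{s_1}_{2,1}\hookrightarrow L^\infty$ holds only at $s_1=\frac d2$; what you actually need and implicitly use for $s_1\le\frac d2$ is the frequency-dependent bound $\|\dot S_{k-1}f\|_{L^\infty}\lesssim 2^{k(\frac d2-s_1)}\|f\|_{\B^{s_1}_{2,1}}$, obtained from Bernstein on each block and the convergent geometric sum over $k'<k$.) With these two corrections the argument is complete.
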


\begin{lemm}\label{le2}
Assume that $F\in W^{[\sigma]+2,\infty}_{loc}(\R)$ with $F(0)=0$. Then for any $f\in L^\infty(\R^d)\cap \B^s_{2,1}(\R^d)$, we have
$$||F(f)||_{\B^s_{2,1}}\leq C(||f||_{L^\infty})||f||_{\B^s_{2,1}}.$$
\end{lemm}

\begin{lemm}\label{le3} For $(p,r_1,r_2,r)\in[1,\infty]^4$, $s_1\neq s_2$ and $\theta\in(0,1)$, the following interpolation inequality holds
$$\|u\|_{\dot{B}_{p,r}^{\theta s_1+(1-\theta)s_2}}\leq C\|u\|^{\theta}_{\dot{B}_{p,r_1}^{s_1}}\|u\|^{1-\theta}_{\dot{B}_{p,r_2}^{s_2}}.$$
\end{lemm}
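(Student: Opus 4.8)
The plan is to reduce the inequality to an elementary statement about the dyadic coefficient sequence and then to exploit the hypothesis $s_1\neq s_2$ through a geometric-series splitting. Set $a_j:=\|\De_j u\|_{L^p}$ and $s:=\theta s_1+(1-\theta)s_2$. The starting observation is the pointwise factorization of the weighted coefficients,
\[
2^{js}a_j=\big(2^{js_1}a_j\big)^{\theta}\big(2^{js_2}a_j\big)^{1-\theta},\qquad j\in\Z,
\]
which is immediate from $s=\theta s_1+(1-\theta)s_2$. Thus everything is encoded in how the $\ell^r$ norm of the left-hand sequence is dominated by suitable powers of the $\ell^{r_1}$ and $\ell^{r_2}$ norms of the two factor sequences $b_j:=2^{js_1}a_j$ and $d_j:=2^{js_2}a_j$, whose norms are exactly $B:=\|u\|_{\B^{s_1}_{p,r_1}}$ and $D:=\|u\|_{\B^{s_2}_{p,r_2}}$.

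First I would record the trivial embedding $\ell^{r_i}\hookrightarrow\ell^\infty$ (of norm one), which gives $b_j\le B$ and $d_j\le D$ for every $j$. Feeding these into the factorization yields the pointwise bound
\[
2^{js}a_j\le\min\!\big(2^{j(s-s_1)}B,\ 2^{j(s-s_2)}D\big),
\]
in which the two exponents are $s-s_1=(1-\theta)(s_2-s_1)$ and $s-s_2=\theta(s_1-s_2)$. Here is where the hypothesis $s_1\neq s_2$ enters decisively: both exponents are nonzero and of opposite sign, so (taking, say, $s_1>s_2$) the first bound decays geometrically as $j\to+\infty$ and the second as $j\to-\infty$.

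Next I would split the $\ell^r$ sum at the crossover index $j_0$ determined by $2^{j_0(s_2-s_1)}=D/B$, using the smaller of the two bounds on each side. A direct computation shows $2^{j_0(s-s_1)}B=2^{j_0(s-s_2)}D=B^{\theta}D^{1-\theta}$, since $(s-s_1)/(s_2-s_1)=1-\theta$ and $(s-s_2)/(s_2-s_1)=-\theta$. Summing the two convergent geometric series—convergence being guaranteed precisely because $s-s_1<0<s-s_2$—then gives
\[
\Big\|\big(2^{js}a_j\big)_j\Big\|_{\ell^r}^r\le\frac{1}{1-2^{r(s-s_1)}}\big(B^{\theta}D^{1-\theta}\big)^r+\frac{1}{1-2^{-r(s-s_2)}}\big(B^{\theta}D^{1-\theta}\big)^r,
\]
and taking $r$-th roots delivers the claim with $C=C(s_1,s_2,\theta,r)$. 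The case $r=\infty$ is even simpler: the supremum over $j\in\Z$ of the above minimum is at most $B^{\theta}D^{1-\theta}$, so no summation is needed.

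The only genuinely delicate point is the general configuration of third indices $r_1,r_2,r$. If $r_1=r_2=r$ the inequality is one line: apply H\"older with exponents $1/\theta$ and $1/(1-\theta)$ to $\sum_j (2^{js}a_j)^r=\sum_j b_j^{\theta r}d_j^{(1-\theta)r}$, which needs neither $s_1\neq s_2$ nor any geometric series and gives $C=1$. For arbitrary $r_1,r_2,r$, however, H\"older no longer closes, and one must trade the summability in $r_1,r_2$ for the plain bounds $b_j\le B$, $d_j\le D$ and recover summability from the strict inequality $s_1\neq s_2$; arranging this splitting cleanly, and confirming that both boundary terms collapse to $B^{\theta}D^{1-\theta}$, is the crux of the argument. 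I expect the verification of the exponent identities at $j_0$ to be the main place where care is required, the remainder being routine.
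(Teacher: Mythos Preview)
Your argument is correct. One cosmetic point: the crossover $j_0$ defined by $2^{j_0(s_2-s_1)}=D/B$ will in general not be an integer, so strictly speaking you should take $j_0$ to be its integer part and absorb the resulting factor of at most $2^{|s_1-s_2|}$ into the constant $C$; otherwise the geometric-series computation is exactly as you wrote.

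As for comparison with the paper: the paper gives no proof of this lemma at all. It is listed among ``some useful lemmas from \cite{Bahouri2011}'' and simply quoted. Your proof is precisely the standard one found in that reference (the splitting at the balance point of the two geometric bounds, available because $s_1\neq s_2$), so there is nothing to contrast---you have supplied what the paper merely cited.
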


\begin{prop}\label{pr-mhd}
Let $U_0,B_0\in \B^{0}_{2,1}(\R^2)$ with $\D U_0=\D B_0=0$. Then there exists a unique solution to \eqref{1.4} such that
$$U,B\in \mathcal{C}(\R^+;\B^0_{2,1}(\R^2))\cap L^1(\R^+;\B^2_{2,1}(\R^2)).$$
Furthermore, there exists some universal constant $C$, one has for all $T\geq 0$,
\bbal
& ||U,B||_{L^\infty_T(\B^0_{2,1})}+||U_t,B_t,\mu \na^2 U,\nu\na^2 B||_{L^1_T(\B^0_{2,1})}\\& \qquad \leq C||U_0,B_0||_{\B^0_{2,1}}\exp\Big(C(\frac{1}{\mu^4}+\frac{1}{\nu^4})||U_0,B_0||^4_{L^2}\Big).
\end{align*}
\end{prop}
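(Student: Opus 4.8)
The plan is to prove this in two stages: first a global energy estimate valid for the two-dimensional incompressible MHD system, and then a propagation of the critical regularity $\B^0_{2,1}$ by a Gronwall-type argument in which the coefficient of the exponential is precisely the $L^1$-in-time norm of $\na U,\na B$, which we control via the first-stage bound. More precisely, for the first stage I would test the $U$-equation with $U$ and the $B$-equation with $B$, using $\D U=\D B=0$ to kill the pressure and to eliminate the transport terms $U\cd\na U$, $U\cd\na B$; the terms $B\cd\na B$ and $B\cd\na U$ cancel after integration by parts (this is the standard structure that makes MHD behave like a symmetric system in the energy norm). This gives $\frac12\frac{d}{dt}(\|U\|_{L^2}^2+\|B\|_{L^2}^2)+\mu\|\na U\|_{L^2}^2+\nu\|\na B\|_{L^2}^2=0$, hence $\|U,B\|_{L^\infty_t(L^2)}\le\|U_0,B_0\|_{L^2}$ and, integrating in time, $\mu\|\na U\|_{L^2_t(L^2)}^2+\nu\|\na B\|_{L^2_t(L^2)}^2\le\frac12\|U_0,B_0\|_{L^2}^2$.

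For the second stage I would apply $\De_j$ to both equations, commute it with the transport operators, and run the standard Besov energy estimate at regularity $s=0$ in $L^2$. Summing over $j\in\Z$ with $\ell^1$ weights, and using Lemma \ref{le1} to estimate the bilinear terms $U\cd\na U$, $B\cd\na B$, $U\cd\na B$, $B\cd\na U$ in $\B^0_{2,1}$ — each being bounded by $\|U,B\|_{\B^1_{2,1}}\|U,B\|_{\B^0_{2,1}}$ up to commutator contributions of the same order — together with the parabolic smoothing of $-\mu\Delta$, $-\nu\Delta$ which gains two derivatives in the time-integrated norm, I would arrive at an inequality of the schematic form
\bbal
\|U,B\|_{L^\infty_t(\B^0_{2,1})}+\|U_t,B_t,\mu\na^2U,\nu\na^2B\|_{L^1_t(\B^0_{2,1})}\le C\|U_0,B_0\|_{\B^0_{2,1}}\exp\Big(C\int_0^t\|\na U,\na B\|_{L^\infty}\,d\tau\Big).
\end{align*}
It then remains to bound $\int_0^t\|\na U,\na B\|_{L^\infty}\,d\tau$. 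Here I would interpolate: in dimension two, $\|\na f\|_{L^\infty}\lesssim\|\na f\|_{\B^0_{2,1}}\lesssim\|f\|_{\B^0_{2,1}}^{1/2}\|f\|_{\B^2_{2,1}}^{1/2}$ by Lemma \ref{le3}, so by Cauchy–Schwarz in time $\int_0^t\|\na U,\na B\|_{L^\infty}\,d\tau\lesssim\|U,B\|_{L^\infty_t(\B^0_{2,1})}^{1/2}\,(t^{1/2})^{?}\cdots$ — but this still involves the unknown $\B^0_{2,1}$ norm, so instead I would close it purely at the $L^2$ level: $\|\na f\|_{L^\infty}\lesssim\|\na f\|_{L^2}^{1/2}\|\na^2 f\|_{L^2}^{1/2}$ in $2d$, hence $\int_0^t\|\na U,\na B\|_{L^\infty}\,d\tau\lesssim\|\na U,\na B\|_{L^2_t(L^2)}^{1/2}\|\na^2U,\na^2B\|_{L^2_t(L^2)}^{1/2}$, and the second factor requires an $L^2_t(H^2)$ bound which itself follows from an $H^1$ energy estimate: testing the equations with $-\Delta U$, $-\Delta B$ gives $\frac{d}{dt}\|\na U,\na B\|_{L^2}^2+\mu\|\na^2U\|_{L^2}^2+\nu\|\na^2B\|_{L^2}^2\lesssim\mu^{-1}(\cdots)$, where the nonlinear terms are handled by the 2d Ladyzhenskaya/Agmon inequalities and absorbed, producing a bound of the form $\|\na U,\na B\|_{L^2_t(L^2)}^{1/2}\|\na^2U,\na^2B\|_{L^2_t(L^2)}^{1/2}\lesssim(\mu^{-2}+\nu^{-2})\|U_0,B_0\|_{L^2}^2$ after using the stage-one bounds. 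Feeding this into the exponential yields exactly the claimed $\exp(C(\mu^{-4}+\nu^{-4})\|U_0,B_0\|_{L^2}^4)$ factor.

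The main obstacle I anticipate is not any single estimate but the bookkeeping needed to make the $H^1$-level estimate self-contained and to track how the powers of $\mu^{-1}$ and $\nu^{-1}$ accumulate: one must be careful that when $-\Delta U$ is used as a test function the magnetic coupling terms $B\cd\na B$ and $B\cd\na U$ no longer cancel exactly and must be controlled by $\|B\|_{L^4}\|\na B\|_{L^4}\|\na^2U\|_{L^2}\lesssim\|B\|_{L^2}^{1/2}\|\na B\|_{L^2}\|\na^2B\|_{L^2}^{1/2}\|\na^2U\|_{L^2}$ and then split by Young's inequality so that the highest-order factors are absorbed into the dissipation with weights $\mu,\nu$, leaving a right-hand side that is integrable in time thanks to stage one. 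Provided this is done cleanly, the exponent comes out with the advertised fourth powers; the global existence and uniqueness statement then follows from these a priori bounds by the usual approximation scheme (Friedrichs mollification or a Galerkin/iteration argument) combined with the contraction estimate in a space one derivative below, which is standard for this system and I would only sketch.
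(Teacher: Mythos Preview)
Your two-stage plan (basic $L^2$ energy, then propagation of $\B^0_{2,1}$ regularity by Gronwall) matches the paper's, but the way you close the Gronwall factor has a genuine gap. The inequality you invoke,
\[
\|\na f\|_{L^\infty}\lesssim\|\na f\|_{L^2}^{1/2}\|\na^2 f\|_{L^2}^{1/2}\quad\text{in }\R^2,
\]
is false: the interpolation space between $L^2$ and $\dot H^1$ is $\dot H^{1/2}$, and $\dot H^{1/2}(\R^2)$ (indeed even $\dot H^1(\R^2)$) does not embed into $L^\infty$. So an $H^1$-level energy estimate, which would give you $\na^2 U,\na^2 B\in L^2_T(L^2)$, is still one endpoint short of controlling $\int_0^T\|\na U,\na B\|_{L^\infty}$; you would need either a logarithmic Brezis--Gallou\"et/Brezis--Wainger correction or an $H^{1+\varepsilon}$ bound, and in either case the clean factor $\exp\big(C(\mu^{-4}+\nu^{-4})\|U_0,B_0\|_{L^2}^4\big)$ would not emerge.

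The paper avoids this endpoint issue altogether by never putting $\|\na U\|_{L^\infty}$ in the Gronwall. Instead it uses the product law (Lemma~\ref{le1} with $s_1=s_2=\tfrac12$, $d=2$) to write
\[
\|U\cdot\na U\|_{\B^0_{2,1}}\lesssim\|U\|_{\B^{1/2}_{2,1}}\|\na U\|_{\B^{1/2}_{2,1}},
\]
then interpolates the second factor as $\|\na U\|_{\B^{1/2}_{2,1}}\lesssim\|U\|_{\B^0_{2,1}}^{1/4}\|U\|_{\B^2_{2,1}}^{3/4}$ and uses Young to absorb $\varepsilon\mu\|U\|_{\B^2_{2,1}}$ into the dissipation. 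What remains in the Gronwall factor is $\mu^{-3}\|U\|_{\B^{1/2}_{2,1}}^4$. The point is that $\B^{1/2}_{2,1}(\R^2)$ is exactly the real-interpolation space between $L^2$ and $\dot H^1$, both of which are controlled by the basic energy identity, so
\[
\mu\int_0^T\|U\|_{\B^{1/2}_{2,1}}^4\lesssim\int_0^T\|U\|_{L^2}^2\|\na U\|_{L^2}^2\lesssim\|U_0,B_0\|_{L^2}^4,
\]
which is the origin of the $\mu^{-4}$ in the exponential. This choice of intermediate space---scaling-critical yet sitting strictly between the two energy-controlled norms---is the step your proposal is missing.
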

\begin{proof}
For any $t\in[0,T]$, the standard energy balance reads:
\bbal
||U(t)||^2_{L^2}+||B(t)||^2_{L^2}+2\mu\int^t_0||\na U||^2_{L^2}+2\nu\int^t_0||\na B||^2_{L^2}=||U_0||^2_{L^2}+||B_0||^2_{L^2},
\end{align*}
which implies for all $T\geq 0$,
\bal\label{2.1}
\mu^{\frac14}||U||_{L^4_T(\B^{\frac12}_{2,1})}+\nu^\frac14||B||_{L^4_T(\B^{\frac12}_{2,1})}\leq C||U_0,B_0||_{L^2}.
\end{align}
From the estimates of the Stokes system in homogeneous Besov spaces, we have
\bal\label{2.2}\bes
&\quad ||U,B||_{L^\infty_T(\B^0_{2,1})}+||U_t,B_t,\mu \na^2 U,\nu \na^2 B||_{L^1_T(\B^0_{2,1})}
\\& \leq C\big(||U_0,B_0||_{\B^0_{2,1}}+||U\cd \na U-B\cd \na B||_{L^1_T(\B^0_{2,1})}+||B\cd \na U-U\cd \na B||_{L^1_T(\B^0_{2,1})}\big).
\ees\end{align}
In view of the interpolation inequality and Young inequality, we deduce that
\bal\label{2.3}\bes
||U\cd\na U||_{L^1_T(\B^0_{2,1})}&\leq C\int^T_0||U||_{\B^\frac12_{2,1}}||\na U||_{\B^\frac12_{2,1}}\dd t
\\&\leq \int^T_0||U||_{\B^\frac12_{2,1}}||\na U||^{\frac14}_{\B^{-1}_{2,1}}||\na U||^{\frac34}_{\B^{1}_{2,1}}\dd t
\\&\leq \frac{C}{\ep^3\mu^3}\int^T_0||U||^{4}_{\B^{\frac12}_{2,1}}||U||_{\B^0_{2,1}}\dd t+\ep \mu||\na^2U||_{L^1_T(\B^0_{2,1})}.
\ees\end{align}
Similar argument as in \eqref{2.3}, we obtain
\bal\bes\label{2.4}
&||B\cd\na B||_{L^1_T(\B^0_{2,1})}\leq \frac{C}{\ep^3\nu^3}\int^T_0||B||^{4}_{\B^{\frac12}_{2,1}}||B||_{\B^0_{2,1}}\dd t+\ep \nu||\na^2 B||_{L^1_T(\B^0_{2,1})},\\&
||U\cd\na B||_{L^1_T(\B^0_{2,1})}\leq \frac{C}{\ep^3\nu^3}\int^T_0||U||^{4}_{\B^{\frac12}_{2,1}}||B||_{\B^0_{2,1}}\dd t+\ep \nu||\na^2 B||_{L^1_T(\B^0_{2,1})},\\&
||B\cd\na U||_{L^1_T(\B^0_{2,1})}
\leq \frac{C}{\ep^3\mu^3}\int^T_0||B||^{4}_{\B^{\frac12}_{2,1}}||U||_{\B^0_{2,1}}\dd t+\ep \mu||\na^2U||_{L^1_T(\B^0_{2,1})}.
\ees\end{align}
Therefore, combing \eqref{2.2}-\eqref{2.4} and choosing $\ep$ small enough, we find that
\bbal\bes
&\quad ||U,B||_{L^\infty_T(\B^0_{2,1})}+||U_t,B_t,\mu \na^2 U,\nu\na^2 B||_{L^1_T(\B^0_{2,1})}\\&\leq C\big((\frac{1}{\mu^3}+\frac{1}{\nu^3})\int^T_0(||U||^{4}_{\B^{\frac12}_{2,1}}+||B||^{4}_{\B^{\frac12}_{2,1}})(||U||_{\B^0_{2,1}}+||B||_{\B^0_{2,1}})\dd t+||U_0,B_0||_{\B^0_{2,1}}\big).
\ees\end{align*}
It follows from the Gronwall inequality and \eqref{2.1} that the desired result of this lemma.
\end{proof}

\section{The proof of the main results}

In this section, we shall give the main details for the proof of Theorem \ref{th1}. Our main idea basically follows from the recent work in \cite{Danchin 2017}

Setting $a=\rho-1$, we infer from \eqref{1.1} that
\beq\label{3.1}\bca
\pa_ta+\D(a u)+\D u=0,\\
\pa_tu+u\cd\na u+P'(1+a)\na a-b\cd \na b+\frac12\na(|b|^2)-\mu \Delta u-\na ((\mu+\lambda)\D u)\\
\qquad =-a(u_t+u\cd\na u),\\
\pa_tb+(\D u)b+u\cd \na b-b\cd \na u-\nu\De b=0,\\
\D b=0.
\eca\eeq
Before continue on, we recall the following local well-posedness of the system \eqref{3.1}.
\begin{theo} \cite{Li 2017}\label{the1.0} Assume that the initial data $(a_0:=\rho-1, u_0,b_0)$ satisfy ${\rm{div}}b_0 = 0$ and
$$(a_0,u_0,b_0)\in\dot{B}_{2,1}^{\frac{d}{2}}\times\dot{B}_{2,1}^{\frac{d}{2}-1}\times\dot{B}_{2,1}^{\frac{d}{2}-1}.$$
In addition, $\inf_{x\in\R^d}a_0(x)>-1$, then there exists some time $T > 0$ such that
the system \eqref{3.1} has a local unique solution $(a, u,b)$ on $[0,T]\times\R^d$ which belongs to the function space
$$E_T:=\widetilde{\mathcal{C}}([0,T];\dot{B}_{2,1}^{\frac{d}{2}})\times(\widetilde{\mathcal{C}}([0,T];\dot{B}_{2,1}^{\frac{d}{2}-1})\cap L_T^1\dot{B}_{2,1}^{\frac{d}{2}+1})^{2d},$$
where $\widetilde{\mathcal{C}}([0,T];\dot{B}_{q,1}^{s}):=\mathcal{C}([0,T];\dot{B}_{q,1}^{s})\cap \widetilde{L}^{\infty}([0,T];\dot{B}_{q,1}^{s})$ with $s\in\R$ and $1\leq q\leq\infty$.
\end{theo}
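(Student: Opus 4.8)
This statement is quoted from \cite{Li 2017}; I sketch the (by-now classical) scheme of Danchin for the compressible Navier--Stokes equations, adapted to the magnetic coupling. The plan is first to construct approximate solutions $(a^n,u^n,b^n)$ by a Picard-type iteration (equivalently, a Friedrichs frequency truncation): at stage $n+1$ one solves a linear transport equation for $a^{n+1}$ with drift $u^n$ and source $-\D u^n$, a constant-coefficient Lam\'e (Stokes-type) system for $u^{n+1}$, and a heat equation for $b^{n+1}$, inserting the previous iterates into all lower-order terms; $\D b^{n}=0$ is propagated by taking the divergence of the $b$-equation. One then shows that, for $T$ small enough (depending on the profile of the data, since $\frac d2$ is a threshold regularity, not merely on its norm), the iterates remain bounded in the space $E_T$ of the statement.

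The uniform bounds rely on three linear estimates: for $a$, the transport estimate
\[
\|a\|_{\widetilde L^\infty_T(\B^{\frac d2}_{2,1})}\les\bigl(\|a_0\|_{\B^{\frac d2}_{2,1}}+\|\D u\|_{L^1_T(\B^{\frac d2}_{2,1})}\bigr)\exp\bigl(C\|\na u\|_{L^1_T(\B^{\frac d2}_{2,1})}\bigr),
\]
which is legitimate because $\frac d2$ lies in the admissible range; for $u$, the maximal-regularity estimate for the Lam\'e operator (available precisely because $\mu>0$ and $\la+2\mu>0$), $\|u\|_{\widetilde L^\infty_T(\B^{\frac d2-1}_{2,1})}+\|u\|_{L^1_T(\B^{\frac d2+1}_{2,1})}\les\|u_0\|_{\B^{\frac d2-1}_{2,1}}+\|F\|_{L^1_T(\B^{\frac d2-1}_{2,1})}$ with $F$ the momentum source; and for $b$, the same with $\mu$ replaced by $\nu$. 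The nonlinearities are then controlled via Lemmas~\ref{le1}--\ref{le3}: $\|u\cd\na u\|_{L^1_T(\B^{\frac d2-1}_{2,1})}\les\|u\|_{L^2_T(\B^{\frac d2}_{2,1})}^2\les\|u\|_{\widetilde L^\infty_T(\B^{\frac d2-1}_{2,1})}\|u\|_{L^1_T(\B^{\frac d2+1}_{2,1})}$, with the analogous quadratic bounds for $b\cd\na b$, $\na(|b|^2)$, $(\D u)b$, $u\cd\na b$, $b\cd\na u$; the pressure term is split $P'(1+a)\na a=P'(1)\na a+(P'(1+a)-P'(1))\na a$, the first piece linear in $a$, the second $\les\|a\|_{\B^{\frac d2}_{2,1}}^2$ by Lemma~\ref{le2} composed with Lemma~\ref{le1}. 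Since $\|u\|_{L^1_T(\B^{\frac d2+1}_{2,1})}\to0$ as $T\to0$ for fixed data, all the quadratic terms are absorbed on a short interval.

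The delicate term is $-a(u_t+u\cd\na u)$. Here $u_t\in L^1_T(\B^{\frac d2-1}_{2,1})$ only, and $\|a\,u_t\|_{\B^{\frac d2-1}_{2,1}}\les\|a\|_{\B^{\frac d2}_{2,1}}\|u_t\|_{\B^{\frac d2-1}_{2,1}}$, so closing the estimate forces one to absorb $\|u_t\|_{L^1_T(\B^{\frac d2-1}_{2,1})}$ into the left-hand side, which needs $\|a\|_{L^\infty_T(\B^{\frac d2}_{2,1})}$ \emph{small} rather than just $T$ small. This is the main obstacle. I would resolve it by the standard low/high-frequency splitting $a_0=\dot{S}_{n_0}a_0+(\mathrm{Id}-\dot{S}_{n_0})a_0$: for $n_0$ large the remainder $(\mathrm{Id}-\dot{S}_{n_0})a_0$ is as small as desired in $\B^{\frac d2}_{2,1}$ and is the part kept at that regularity, while the smooth part $\dot{S}_{n_0}a_0$ contributes a piece of $a$ estimated in $\B^{\frac d2-1}_{2,1}\cap\B^{\frac d2}_{2,1}$ with (for fixed $n_0$ harmless) $n_0$-dependent constants, whose product with $u_t$ gains a factor of $T$. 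An equivalent route is to recast the momentum equation in velocity form, with principal part the variable-coefficient operator $\tfrac{1}{1+a}\bigl(\mu\Delta u+\na((\mu+\la)\D u)\bigr)$, and to use the parabolic estimates for the Lam\'e system with nonconstant coefficients together with $\inf_x a_0(x)>-1$.

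Finally, the bounds in $E_T$ and the equations control $\pa_t(a^n,u^n,b^n)$, so one passes to the limit by Aubin--Lions compactness --- or, alternatively, shows the Picard iterates are Cauchy in the one-derivative-lower space $\widetilde{\mathcal C}([0,T];\B^{\frac d2-1}_{2,1})\times\bigl(\widetilde{\mathcal C}([0,T];\B^{\frac d2-2}_{2,1})\cap L^1_T\B^{\frac d2}_{2,1}\bigr)^{2d}$; continuity in time at the endpoint regularity is recovered a posteriori. Uniqueness comes from a stability estimate for the difference of two solutions in this same lower norm, and since the transport estimate at the threshold $\B^{\frac d2}_{2,1}$ costs a logarithm, one closes with an Osgood (log-Gronwall) argument rather than plain Gronwall.
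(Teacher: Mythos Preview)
The paper does not prove this statement at all: Theorem~\ref{the1.0} is simply quoted from \cite{Li 2017} and used as a black box for the local existence step of the continuation argument. Your sketch is therefore not comparable to anything in the present paper, but it is a faithful outline of the Danchin scheme that underlies \cite{Li 2017}; in particular your identification of the $a\,u_t$ term as the obstruction at the critical regularity, and the low/high-frequency splitting of $a_0$ (or the equivalent passage to a rough-coefficient Lam\'e system) as the cure, is exactly the device used in that literature.
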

We set $$v=u-U\quad\mbox{and}\quad c=b-B.$$
From the very beginning, the potential $\Q v$ and divergence-free $\PP v$ parts of $v$ are treated separately. Applying $\mathcal{Q}$ to the velocity equation of \eqref{3.1} and noticing that $\Q v=\Q u$ yield
\bal\label{3.2}
\pa_t(\Q v)+\Q((v+U)\cd \na \Q v)-\kappa\De \Q v+\na a=-\Q(aU_t+av_t)-\Q R_1,
\end{align}
where, denoting $k(a)=P'(1+a)-1$
\bal\begin{split}\label{3.2-1}
R_1&=(1+a)(v+U)\cd \na \PP v+(1+a)(v+U)\cdot \na U+a(v+U)\cd \na \Q v
\\&\quad +k(a)\na a-(B+c)\cd\na (B+c)+\frac12\na (|B+c|^2).
\end{split}\end{align}

In view of the density equation of \eqref{3.1} and using $u=\Q v+\PP v+U$, we find that $a$ satisfies
\bal\label{3.3}
\pa_ta+(v+U)\cd \na a+\D \Q v=-a\D \Q v.
\end{align}

Because $\PP U=U$ and $\PP(\Q v\cd \na \Q v)=\PP(a\na a)=0$, applying $\PP$ to the velocity equation of \eqref{3.1}, we discover that
\bal\label{3.4}
\pa_t(\PP v)+\PP((v+U)\cd \na \PP v)-\mu\De \PP v=-\PP(aU_t+av_t+a\na a)-\PP R_2,
\end{align}
where
\bal\label{3.4-1}\begin{split}
R_2&=(1+a)(v+U)\cd \na \Q v+(1+a)v\cd\na U+a(v+U)\cd \na \PP v
\\& \quad +aU\cd \na U-(B+c)\cd\na c-c\cd\na B
\\&=(1+a)\PP v\cd\na (U+\Q v)+(1+a)U\cd\na \Q v+(1+a)\Q v\cd \na U
\\& \quad +a(v+U)\cd \na \PP v+aU\cd\na U+a\Q v\cd\na \Q v-(B+c)\cd\na c-c\cd\na B.
\end{split}\end{align}

According to the  magnetic equation of \eqref{3.1}, we can show that $c$ satisfies
\bal\label{3.5}
\pa_tc+(v+U)\cd \na c-\nu\De c=-R_3,
\end{align}
where
\bal\label{3.5-1}
R_3=(\D \Q v)B+(\D \Q v)c+v\cd \na B-(B+c)\cd \na v-c\cd \na U.
\end{align}

In the sequel, we denote $a^\ell$ and $a^h$ the low and high frequencies parts of $a$ as
\bbal
a^\ell=\sum_{2^{j}\kappa\leq 1}\De_ja, \qquad a^h=\sum_{2^{j}\kappa> 1}\De_ja,
\end{align*}
and set
\bbal
&X_d(T)=||\Q v,a,\kappa \na a||_{L^\infty_T(\B^{\frac d2-1}_{2,1})}+||\Q v_t+\na a,\kappa \na^2\Q v,\kappa\na^2a^\ell,\na a^h||_{L^1_T(\B^{\frac d2-1}_{2,1})},
\\& Y_d(T)=Y_{d,1}(T)+Y_{d,2}(T):=||\PP v,c||_{L^\infty_T(\B^{\frac d2-1}_{2,1})}+||\PP v_t,c_t,\mu \na^2\PP v,\nu\na^2c||_{L^1_T(\B^{\frac d2-1}_{2,1})},
\\& Z_d(T)=||U,B||_{L^\infty_T(\B^{\frac d2-1}_{2,1})}+||U_t,B_t,\mu\na^2 U,\nu\na^2 B||_{L^1_T(\B^{\frac d2-1}_{2,1})},
\\& X_d(0)=||a_0,\Q v_0||_{\B^{\frac d2-1}_{2,1}}+\kappa||a_0||_{\B^{\frac d2}_{2,1}}.
\end{align*}

It is easy to show that
\bal\label{lll0}
Z_d(T)\leq M  \quad \mathrm{for} \ \mathrm{all} \ T>0.
\end{align}
We concentrate our attention on the proof global in time a priori estimates, as the local well-posedness issue has been ensured by Theorem \ref{the1.0}. We claim that if $\kappa$ is large enough then one may find some (large) $D$ and (small) $\delta$ so that there holds for all $T<T^*$,
\bal\bca\label{lll}
X_d(T)\leq D, \quad Y_d(T)\leq \de, \quad \kappa^{-1}D\ll1,
\\\de(\frac1\mu+\frac1\nu+1)\leq 1, \quad D\geq (M+1), \quad ||a(t,\cdot)||_{L^\infty}\leq \frac12.
\eca\end{align}

\text{\bf Step 1. Estimate on the terms $\PP v$ and $c$.}

We first consider the estimates for $\PP v$. Applying $\De_j$ to \eqref{3.4}, taking the $L^2$ inner product with $\De_j \PP v$ then using that $\PP^2=\PP$, we deduce that
\bal\label{3.9}
\frac12\frac{\dd}{\dd t}||\De_j\PP v||^2_{L^2}&+\mu||\na \De_j\PP v||^2_{L^2}=\int_{\R^d}([v+U,\De_j]\cd \na \PP v)\cd \De_j \PP v\dd x
\\&-\int_{\R^d}\De_j(aU_t+av_t+a\na a+R_2)\cd \De_j\PP v\dd x-\frac12\int_{\R^d}|\De_j\PP v|^2\D v\dd x.
\end{align}
According to the commutator estimates of Lemma 2.100 in \cite{Bahouri2011}, the commutator term may be estimated as follows:
\bal\label{lyz}
2^{j(\frac d2-1)}||[v+U,\De_j]\cd \na \PP v||_{L^2}\leq Cc_j||\na (v+U)||_{\B^{\frac d2}_{2,1}}||\PP v||_{\B^{\frac d2-1}_{2,1}}, \quad\mbox{with}\quad ||c_j||_{l^1}=1.
\end{align}
 Now, multiplying both sides of \eqref{3.9} by $2^{j(\frac d2-1)}$ and summing up over $j\in\mathbb{Z}$, using Lemma \ref{le1} and \eqref{lyz}, we obtain that
\bal\label{l-1}\begin{split}
& \qquad ||\PP v||_{L^\infty_T(\B^{\frac d2-1}_{2,1})}+\mu||\na^2\PP v||_{L^1_T(\B^{\frac d2-1}_{2,1})}\leq C \int^T_0||\na (v+U)||_{\B^{\frac d2}_{2,1}}||\PP v||_{\B^{\frac d2-1}_{2,1}}\dd t
\\& +C||a(U_t+\PP v_t+(\Q v_t+\na a))||_{L^1_T(\B^{\frac d2-1}_{2,1})}+C||R_2||_{L^1_T(\B^{\frac d2-1}_{2,1})}.
\end{split}\end{align}

In order to bound $||\PP v_t||_{L^1_T(\B^{\frac d2-1}_{2,1})}$, we infer from \eqref{3.4} and \eqref{l-1} that
\bal\label{l-2}\begin{split}
& ||\PP v||_{L^\infty_T(\B^{\frac d2-1}_{2,1})}+||\PP v_t,\mu\na^2\PP v||_{L^1_T(\B^{\frac d2-1}_{2,1})}\leq C\int^T_0||\na (v+U)||_{\B^{\frac d2}_{2,1}}||\PP v||_{\B^{\frac d2-1}_{2,1}}\dd t
\\& \quad +C\int^T_0||(v+U)\cd \na \PP v||_{\B^{\frac d2-1}_{2,1}}\dd t+C\int^T_0||a(U_t+\PP v_t+(\Q v_t+\na a))||_{\B^{\frac d2-1}_{2,1}}\dd t
\\& \quad+C\int^T_0||R_2||_{\B^{\frac d2-1}_{2,1}}\dd t.
\end{split}\end{align}

Next, we will estimate the Besov norm of the right-hand side for \eqref{l-2}. For the second term of the right-hand side for \eqref{l-2}, we can infer from Lemma \ref{le1} that

\bal\label{l-3}\bes
&\qquad ||(v+U)\cd \na \PP v||_{L^1_T(\B^{\frac d2-1}_{2,1})}
\\&\leq  C\int^T_0||\PP v||_{\B^{\frac d2-1}_{2,1}}||\PP v||_{\B^{\frac d2+1}_{2,1}}\dd t+C\int^T_0||\Q v+U||_{\B^{\frac d2}_{2,1}}||\PP v||_{\B^{\frac d2}_{2,1}}\dd t
\\&\leq C\int^T_0||\PP v||_{\B^{\frac d2-1}_{2,1}}||\PP v||_{\B^{\frac d2+1}_{2,1}}\dd t
\\&\quad+\frac{C}{\ep\mu}\int^T_0||(\Q v,U)||^2_{\B^{\frac d2}_{2,1}}||\PP v||_{\B^{\frac d2-1}_{2,1}}\dd t+C\mu\ep||\na^2\PP v||_{L^1_T(\B^{\frac d2-1}_{2,1})}
\\&\leq C\int^T_0||\PP v||_{\B^{\frac d2-1}_{2,1}}||\PP v||_{\B^{\frac d2+1}_{2,1}}\dd t+C\ep Y_d(T)+\frac{C}{\ep\mu}\int^T_0||(\Q v,U)||^2_{\B^{\frac d2}_{2,1}}||\PP v||_{\B^{\frac d2-1}_{2,1}}\dd t.
\ees\end{align}

For the third term of the right-hand side for \eqref{l-2}, we can infer from that
\bal\bes
&\qquad \int^T_0||a(U_t+\PP v_t+(\Q v_t+\na a))||_{\B^{\frac d2-1}_{2,1}}\dd t
\\& \leq C||a||_{L^\infty_T(\B^{\frac d2}_{2,1})}(||U_t||_{L^1_T(\B^{\frac d2-1}_{2,1})}+||\PP v_t||_{L^1_T(\B^{\frac d2-1}_{2,1})}+||\Q v_t+\na a||_{L^1_T(\B^{\frac d2-1}_{2,1})})
\\&\leq C\kappa^{-1}X_d(T)\Big(X_d(T)+Y_d(T)+Z_d(T)\Big).
\ees\end{align}

For the last term of the right-hand side for \eqref{l-2}, in view of Lemma \ref{le1}, we can estimate them into the following parts:
\bal\label{l-4}\bes
&\qquad||(1+a)\PP v\cd\na (U+\Q v)||_{L^1_T(\B^{\frac d2-1}_{2,1})}
\\&\leq C\int^T_0(1+||a||_{\B^{\frac d2}_{2,1}})||U+\Q v||_{\B^{\frac d2+1}_{2,1}}||\PP v||_{\B^{\frac d2-1}_{2,1}} \dd t,
\ees\end{align}

\bal\label{l-5}\bes
&\qquad ||(1+a)(\Q v\cd\na U+U\cd \na \Q v)||_{L^1_T(\B^{\frac d2-1}_{2,1})}
\\&\leq C (1+||a||_{L^\infty_T(\B^{\frac d2}_{2,1})})
||\Q v||_{L^2_T(\B^{\frac d2}_{2,1})}||U||_{L^2_T(\B^{\frac d2}_{2,1})}
\\&\leq C(1+\kappa^{-1}X_d(T))\kappa^{-\frac12}\mu^{-\frac12}X_d(T)Z_d(T),
\ees\end{align}

\bal\label{l-6}\bes
& \qquad ||a(v+U)\cd \na \PP v||_{L^1_T(\B^{\frac d2-1}_{2,1})}
\\&\leq C||a||_{L^\infty_T(\B^{\frac d2}_{2,1})}||v+U||_{L^\infty_T(\B^{\frac d2-1}_{2,1})}||\PP v||_{L^1_T(\B^{\frac d2+1}_{2,1})}
\\&\leq C\kappa^{-1}\mu^{-1}X_d(T)Y_d(T)\Big(X_d(T)+Y_d(T)+Z_d(T)\Big),
\ees\end{align}

\bal\label{l-7}\bes
& \qquad ||aU\cd\na U+a\Q v\cd\na\Q v||_{L^1_T(\B^{\frac d2-1}_{2,1})}
\\&\leq C||a||_{L^\infty_T(\B^{\frac d2}_{2,1})}(||U||_{L^\infty_T(\B^{\frac d2-1}_{2,1})}||U||_{L^1_T(\B^{\frac d2+1}_{2,1})}
+||\Q v||_{L^\infty_T(\B^{\frac d2-1}_{2,1})}||\Q v||_{L^1_T(\B^{\frac d2+1}_{2,1})})
\\& \leq C\kappa^{-1}X_d(T)\Big(\mu^{-1}Z^2_d(T)+\kappa^{-1}X^2_d(T)\Big),
\ees\end{align}

\bal\label{l-8}\bes
&\qquad||(B+c)\cd\na c+c\cd\na B||_{L^1_T(\B^{\frac d2-1}_{2,1})}
\\&\leq C\int^T_0||c||_{\B^{\frac d2-1}_{2,1}}||c||_{\B^{\frac d2+1}_{2,1}}\dd t+C\int^T_0||B||_{\B^{\frac d2}_{2,1}}||c||_{\B^{\frac d2}_{2,1}}\dd t.
\\&\leq C\int^T_0||c||_{\B^{\frac d2-1}_{2,1}}||c||_{\B^{\frac d2+1}_{2,1}}\dd t+C\ep\nu ||c||_{L^1_T(\B^{\frac d2+1}_{2,1})}+\frac{C}{\ep\nu}\int^T_0||B||^2_{\B^{\frac d2}_{2,1}}||c||_{\B^{\frac d2-1}_{2,1}}\dd t
\\&\leq C\int^T_0||c||_{\B^{\frac d2-1}_{2,1}}||c||_{\B^{\frac d2+1}_{2,1}}\dd t+C\ep Y_d(T)
+\frac{C}{\ep\nu}\int^T_0||B||^2_{\B^{\frac d2}_{2,1}}||c||_{\B^{\frac d2-1}_{2,1}}\dd t.
\ees\end{align}

Therefore, summing up \eqref{l-2}-\eqref{l-8}, we obtain
\bal\label{y-1}\begin{split}
&\qquad ||\PP v||_{L^\infty_T(\B^{\frac d2-1}_{2,1})}+||\PP v_t,\mu\na^2\PP v||_{L^1_T(\B^{\frac d2-1}_{2,1})}
\\& \leq C\ep Y_d(T)+C\kappa^{-1}X_d(T)\Big(X_d(T)+Y_d(T)+Z_d(T)\Big)
\\& \quad +C(1+\kappa^{-1}X_d(T))\kappa^{-\frac12}\mu^{-\frac12}X_d(T)Z_d(T)+C\kappa^{-1}X_d(T)\Big(\mu^{-1}Z^2_d(T)+\kappa^{-1}X^2_d(T)\Big)
\\&\quad +C\kappa^{-1}\mu^{-1}X_d(T)Y_d(T)\Big(X_d(T)+Y_d(T)+Z_d(T)\Big)
\\&\quad +C \int^T_0\Big((1+||a||_{\B^{\frac d2}_{2,1}})||(\PP v,\Q v,U,c)||_{\B^{\frac d2+1}_{2,1}}+\frac{1}{\ep\mu}||(\Q v,U)||^2_{B^{\frac d2}_{2,1}}+\frac{1}{\ep\nu}||B||^2_{B^{\frac d2}_{2,1}}\Big)Y_{d,1}\dd t.
\end{split}\end{align}

Now, we estimate the term for $c$. Similar argument as in \eqref{l-2} and \eqref{l-3}, we infer from \eqref{3.5} that
\bal\label{l-9}\bes
& \qquad ||c||_{L^\infty_T(\B^{\frac d2-1}_{2,1})}+||(c_t,\nu\na^2c)||_{L^1_T(\B^{\frac d2-1}_{2,1})}
\\&\leq ||b_0-B_0||_{\B^{\frac d2-1}_{2,1}}+C \int^T_0||(\PP v,\Q v,U)||_{\B^{\frac d2+1}_{2,1}}||c||_{\B^{\frac d2-1}_{2,1}}\dd t
\\& \quad +C||(v+U)\cd \na c||_{L^1_T(\B^{\frac d2-1}_{2,1})}+C||R_3||_{L^1_T(\B^{\frac d2-1}_{2,1})}.
\ees\end{align}

For the last two terms of the right-hand side for \eqref{l-9}, according to Lemma \ref{le1}, we can tackle with them as follows:
\bal\label{l-9.5}\bes
&\qquad ||(v+U)\cd \na c||_{L^1_T(\B^{\frac d2-1}_{2,1})}
\\&\leq  C\int^T_0||\PP v||_{\B^{\frac d2-1}_{2,1}}||c||_{\B^{\frac d2+1}_{2,1}}\dd t+C\int^T_0||\Q v+U||_{\B^{\frac d2}_{2,1}}||c||_{\B^{\frac d2}_{2,1}}\dd t
\\&\leq C\int^T_0||\PP v||_{\B^{\frac d2-1}_{2,1}}||c||_{\B^{\frac d2+1}_{2,1}}\dd t
+\frac{C}{\ep\nu}\int^T_0||(\Q v,U)||^2_{\B^{\frac d2}_{2,1}}||c||_{\B^{\frac d2-1}_{2,1}}\dd t+C\nu\ep||c||_{L^1_T(\B^{\frac d2+1}_{2,1})}
\\&\leq C\int^T_0||\PP v||_{\B^{\frac d2-1}_{2,1}}||c||_{\B^{\frac d2+1}_{2,1}}\dd t+C\ep Y_d(T)+\frac{C}{\ep\nu}\int^T_0||(\Q v,U)||^2_{\B^{\frac d2}_{2,1}}||c||_{\B^{\frac d2-1}_{2,1}}\dd t.
\ees\end{align}

\bal\label{l-10}
||(\D \Q v)c-c\cd \na v-c\cd \na U||_{L^1_T(\B^{\frac d2-1}_{2,1})}\leq C\int^T_0||(U,\PP v,\Q v)||_{\B^{\frac d2+1}_{2,1}}||c||_{\B^{\frac d2-1}_{2,1}}\dd t,
\end{align}

\bal\bes\label{l-11}
&\qquad ||(\D \Q v)B+v\cd \na B-B\cd \na v||_{L^1_T(\B^{\frac d2-1}_{2,1})}
\\&\leq C\int^T_0||\Q v||_{B^{\frac d2}_{2,1}}||B||_{B^{\frac d2}_{2,1}}\dd t+C\int^T_0||\PP v||_{B^{\frac d2}_{2,1}}||B||_{B^{\frac d2}_{2,1}}\dd t
\\&\leq C\int^T_0||\Q v||_{B^{\frac d2}_{2,1}}||B||_{B^{\frac d2}_{2,1}}\dd t+\frac{C}{\ep\nu}\int^T_0||\PP v||_{B^{\frac d2-1}_{2,1}}||B||^2_{B^{\frac d2}_{2,1}}\dd t
+C\ep\nu||\PP v||_{L^1_T(\B^{\frac d2+1}_{2,1})}
\\&\leq C\kappa^{-\frac12}\nu^{-\frac12}X_d(T)Z_d(T)+C\ep Y_d(T)+\frac{C}{\ep\nu}\int^T_0||\PP v||_{B^{\frac d2-1}_{2,1}}||B||^2_{B^{\frac d2}_{2,1}}\dd t.
\ees\end{align}
Hence, collecting the estimates \eqref{l-9}-\eqref{l-11}, we get
\bal\label{y-2}\bes
& \qquad ||c||_{L^\infty_T(\B^{\frac d2-1}_{2,1})}+||(c_t,\nu\na^2c)||_{L^1_T(\B^{\frac d2-1}_{2,1})}
\\&\leq ||b_0-B_0||_{\B^{\frac d2-1}_{2,1}}+\frac{C}{\nu}Y^2_d(T)+C\ep Y_d(T) +C\kappa^{-\frac12}\nu^{-\frac12}X_d(T)Z_d(T)
\\& \quad+C \int^T_0\Big(||(\PP v,\Q v,U,c)||_{\B^{\frac d2+1}_{2,1}}+\frac{1}{\ep\nu}||(\Q v,U,B)||^2_{B^{\frac d2}_{2,1}}\Big)Y_{d}\dd t.
\ees\end{align}
Then, combining \eqref{y-1} and \eqref{y-2} and choosing $\ep$ small enough, we can conclude from Gronwall's inequality that
\bal\label{ly1}\bes
Y_d(T)&\leq Ce^{C||\PP v,\Q v,U,c||_{L^1_T(\B^{\frac d2+1}_{2,1})}+(\frac C\mu+\frac C\nu)||(\Q v,U,B)||^2_{L^1_T(\B^{\frac d2}_{2,1})}}\Big\{\kappa^{-1}X_d(T)\Big(X_d(T)+Y_d(T)+Z_d(T)\Big)
\\& \quad +(1+\kappa^{-1}X_d(T))\kappa^{-\frac12}\mu^{-\frac12}X_d(T)Z_d(T)+\kappa^{-1}X_d(T)\Big(\mu^{-1}Z^2_d(T)+\kappa^{-1}X^2_d(T)\Big)
\\&\quad +\kappa^{-1}\mu^{-1}X_d(T)Y_d(T)\Big(X_d(T)+Y_d(T)+Z_d(T)\Big)+\kappa^{-\frac12}\nu^{-\frac12}X_d(T)Z_d(T)\Big\}.
\ees\end{align}

\text{\bf Step 2. Estimate on the terms $\Q v$ and $a$.}

Now, applying $\De_j$ to \eqref{3.1} and \eqref{3.2} yields that
\bal\label{ll-1}\bca
\pa_ta_j+(v+U)\cd \na a_j+\D \Q v_j=g_j,\\
\pa_t\Q v_j+\Q((v+U)\cd \na \Q v_j)-\kappa \De \Q v_j+\na a_j=f_j,
\eca\end{align}
where
\bal\label{ll-2}\bes
&a_j=\De_ja, \quad \Q v_j=\De_j\Q v, \quad g_j=-\De_j(a \D \Q v)-[\De_j,(v+U)]\cd \na a,
\\& f_j=-\De_j\Q(aU_t+av_t)-\De_j\Q R_1-\Q[\De_j,(v+U)]\cd \na \Q v.
\ees\end{align}

We take the $L^2$ inner product for the first equation of \eqref{ll-1} with $a_j$ and the second equation of \eqref{ll-1} with $\Q v_j$ to obtain
\bal\label{ll-3}\bca
\frac12\frac{\dd}{\dd t}||a_j||^2_{L^2}+(a_j,\D \Q v_j)=\frac12(\D v, a^2_j)+(g_j,a_j), \\
\frac12\frac{\dd}{\dd t}||\Q v_j||^2_{L^2}+\kappa||\na \Q v_j||^2_{L^2}-(a_j,\D \Q v_j)=\frac12(\D v,|\Q v_j|^2)+(f_j,\Q v_j),
\eca\end{align}
We next want to estimate for $||\na a_j||^2_{L^2}$. From the first equation of \eqref{ll-1}, we have
\bal\label{ll-4}
\pa_t\na a_j+(v+U)\cd \na \na a_j+\na \D \Q v_j=\na g_j-\na(v+U)\cd \na a_j.
\end{align}
Following \eqref{ll-4} and second equation of \eqref{ll-1} and taking the $L^2$ inner product, we obtain
\bal\label{ll-5}\bca
\frac12\frac{\dd}{\dd t}||\na a_j||^2_{L^2}+((v+U)\cd \na \na a_j, \na a_j)+(\na \D \Q v_j,\na a_j)\\
\qquad =(\na g_j-\na (v+U)\cd \na a_j,\na a_j),\\
\frac{\dd}{\dd t}(\Q v_j,\na a_j)+(v+U,\na (\Q v_j\cd \na a_j))-\kappa(\De \Q v_j, \na a_j)+||\na a_j||^2_{L^2}
\\ \qquad +(\na \D \Q v_j,\Q v_j)=(\na g_j-\na (v+U)\cd \na a_j,\Q v_j)+(f_j,\na a_j).
\eca\end{align}
Noticing that $(\na \D \Q v_j,\na a_j)=(\De \Q v_j, \na a_j)$ and $\De \Q v_j=\na \D \Q v_j$, we get
\bal\label{ll-6}\bes
& \quad \frac12\frac{\dd}{\dd t}(\kappa||\na a_j||^2_{L^2}+2(\Q v_j\cd \na a_j))+(||\na a_j||^2_{L^2}-||\na \Q v_j||^2_{L^2})
\\&=(\frac12\kappa|\na a_j|^2+\Q v_j\cd \na a_j,\D v)+\kappa(\na g_j-\na(v+U)\cd \na a_j,\na a_j)
\\&\quad +(\na g_j-\na (v+U)\cd \na a_j,\Q v_j)+(f_j,\na a_j).
\ees\end{align}
Multiplying \eqref{ll-6} by $\kappa$ and adding up twice \eqref{ll-3} yield
\bal\label{ll-7}\bes
&\frac12\frac{\dd}{\dd t}\mathcal{L}^2_j+\kappa(||\na \Q v_j||^2_{L^2}+||\na a_j||^2_{L^2})
\\&\quad =\int_{\R^d}(2g_ja_j+2f_j\cd \Q v_j+\kappa^2\na g_j\cd \na a_j+\kappa\na g_j\cd\Q v_j+ \kappa f_j\cd\na a_j)\dd x
\\& \qquad +\frac12\int_{\R^d}\mathcal{L}^2_j\D v\dd x-\kappa\int_{\R^d}(\na(v+U)\cd\na a_j)\cd(\kappa\na a_j+\Q v_j)\dd x,
\ees\end{align}
with
\bal\label{ll-8}\bes
\mathcal{L}^2_j&=\int_{\R^d}(2a^2_j+2|\Q v_j|^2+2\kappa\Q v_j\cd \na a_j+|\kappa\na a_j|^2)\dd x
\\&=\int_{\R^d}(2a^2_j+|\Q v_j|^2+|\Q v_j+ \kappa\na a_j|^2)\dd x\approx ||(\Q v_j,a_j,\kappa \na a_j)||^2_{L^2}.
\ees\end{align}
By \eqref{ll-8}, we obtain
\bbal
\kappa(||\na \Q v_j||^2_{L^2}+||\na a_j||^2_{L^2})\geq c\min(\kappa 2^{2j},\kappa^{-1})\mathcal{L}^2_j,
\end{align*}
which along with \eqref{ll-7} yields
\bal\label{ll-8.1}\bes
\frac12\frac{\dd}{\dd t}\mathcal{L}^2_j+c\min(\kappa 2^{2j},\kappa^{-1})\mathcal{L}^2_j\leq& (\frac12||\D v||_{L^\infty}+||\na (v+U)||_{L^\infty})\mathcal{L}^2_j
\\&+C||(g_j,f_j,\kappa \na g_j)||_{L^2}\mathcal{L}_j.
\ees\end{align}
Multiplying both sides of \eqref{ll-8.1} by $2^{j(\frac d2-1)}$ and then summing up over $j\in\mathbb{Z}$, we infer from Remark \ref{re1} that
\bal\label{ll-9}\bes
&||(a,\kappa\na a,\Q v)||_{L^\infty_T(\B^{\frac d2-1}_{2,1})}+||(\kappa \na^2\Q v,\kappa\na^2a^\ell,\na a^h)||_{L^1_T(\B^{\frac d2-1}_{2,1})}
\\&\leq C ||(a,\kappa\na a,\Q v)(0)||_{\B^{\frac d2-1}_{2,1}}+C\int^T_0||(v,U)||_{\B^{\frac d2+1}_{2,1}}||(a,\kappa \na a,\Q v)||_{\B^{\frac d2-1}_{2,1}}\dd t
\\&\quad +C\int^T_0\sum_{j\in \Z}2^{j(\frac d2-1)}||(g_j,f_j,\kappa\na g_j)||_{L^2}\dd t.
\ees\end{align}
Combining the estimates
\bbal
&||a\D \Q v,\kappa\na(a\D \Q v)||_{L^1_T(\B^{\frac d2-1}_{2,1})}\leq C\int^T_0||\D \Q v||_{\B^{\frac d2}_{2,1}}||a,\kappa\na a||_{\B^{\frac d2-1}_{2,1}}\dd t,
\end{align*}
and
\bbal
& \qquad\int^T_0\sum_j2^{j(\frac d2-1)}||[\De_j,(v+U)]\na a,\kappa\na([\De_j,(v+U)]\na a)||_{L^2}\dd t
\\&\leq C\int^T_0||\na (v+U)||_{\B^{\frac d2}_{2,1}}||a,\kappa\na a||_{\B^{\frac d2-1}_{2,1}}\dd t,
\end{align*}
we have
\bal\label{ll-10}
\int^T_0\sum_{j\in \Z}2^{j(\frac d2-1)}||(g_j,\kappa\na g_j)||_{L^2}\dd t\leq C\int^T_0||(v,U)||_{\B^{\frac d2+1}_{2,1}}||(a,\kappa \na a)||_{\B^{\frac d2-1}_{2,1}}\dd t.
\end{align}
Next, we will estimate the last term $\int^T_0\sum_{j\in \Z}2^{j(\frac d2-1)}||f_j||_{L^2}\dd t$. According to Lemmas \ref{le1}-\ref{le2} and the commutator estimates of Lemma 2.100 in \cite{Bahouri2011}, we have
\bal\label{ll-11}\bes
&\qquad ||(1+a)(v+U)\cd \na (\PP v+U)||_{L^1_T(\B^{\frac d2-1}_{2,1})}
\\&\leq C(1+||a||_{L^\infty_T(\B^{\frac d2}_{2,1})})||(\PP v,U)||_{L^\infty_T(\B^{\frac d2-1}_{2,1})} ||(\na \PP v,\na U)||_{L^1_T(\B^{\frac d2}_{2,1})}
\\& \quad +C\int^T_0(1+||a||_{\B^{\frac d2}_{2,1}})||(\na \PP v,\na U)||_{\B^{\frac d2}_{2,1}}||\Q v||_{\B^{\frac d2-1}_{2,1}}\dd t
\\&\leq C\big(1+\kappa^{-1}X_d(T)\big)\mu^{-1}\big(Y^2_d(T)+Z^2_d(T)\big)
\\& \quad +C\int^T_0(1+||a||_{\B^{\frac d2}_{2,1}})||(\na \PP v,\na U)||_{\B^{\frac d2}_{2,1}}||\Q v||_{\B^{\frac d2-1}_{2,1}}\dd t,
\ees\end{align}

\bal\label{ll-12}\bes
||a(v+U)\cd\na \Q v||_{L^1_T(\B^{\frac d2-1}_{2,1})}&\leq C||a||_{L^\infty_T(\B^{\frac d2}_{2,1})}||(v,U)||_{L^\infty_T(\B^{\frac d2-1}_{2,1})}||\na \Q v||_{L^1_T(\B^{\frac d2}_{2,1})}
\\&\leq C\kappa^{-2}X^2_d(T)\big(X_d(T)+Y_d(T)+Z_d(T)\big),
\ees\end{align}

\bal\label{ll-13}\bes
||k(a)\na a||_{L^1_T(\B^{\frac d2-1}_{2,1})}&\leq C\int^T_0||a||^2_{\B^{\frac d2}_{2,1}}\dd t\leq C\int^T_0(||a^\ell||^2_{\B^{\frac d2}_{2,1}}+||a^h||^2_{\B^{\frac d2}_{2,1}})\dd t
\\&\leq C\int^T_0(||a^\ell||_{\B^{\frac d2-1}_{2,1}}||a^\ell||_{\B^{\frac d2+1}_{2,1}}+||a^h||^2_{\B^{\frac d2}_{2,1}})\dd t\leq C\kappa^{-1}X^2_d(T),
\ees\end{align}

\bal\label{ll-14}
\int^T_0\sum_{j\in\Z}2^{j(\frac d2-1)}||[\De_j,v+U]\na \Q v||_{L^2}\dd t\leq C\int^T_0||\na (v+U)||_{\B^{\frac d2}_{2,1}}||\Q v||_{\B^{\frac d2-1}_{2,1}}\dd t.
\end{align}

\bal\label{ll-15}\bes
&\qquad ||aU_t||_{L^1_T(\B^{\frac d2-1}_{2,1})}+||av_t||_{L^1_T(\B^{\frac d2-1}_{2,1})}
\\&\leq C||(U_t,\PP v_t,\Q v_t+\na a)||_{L^1_T(\B^{\frac d2-1}_{2,1})}||a||_{L^\infty_T(\B^{\frac d2}_{2,1})}+C\int^T_0||a||^2_{\B^{\frac d2}_{2,1}}\dd t
\\&\leq C\kappa^{-1}X_d(T)\Big(X_d(T)+Y_d(T)+Z_d(T)\Big),
\ees\end{align}

\bal\label{ll-16}\bes
&\qquad ||\frac12\na (|B+c|^2)-(B+c)\cd\na (B+c)||_{L^1_T(\B^{\frac d2-1}_{2,1})}
\\&\leq C||(B,c)||_{L^\infty_T(\B^{\frac d2-1}_{2,1})}||(B,c)||_{L^1_T(\B^{\frac d2+1}_{2,1})}\leq \frac{C}{\nu}\big(Z^2_d(T)+ Y^2_d(T)\big).
\ees\end{align}
Therefore, collecting \eqref{ll-9}-\eqref{ll-16},  we have
\bal\label{ly2}\bes
X_d(T)&\leq Ce^{C(1+||a||_{L^\infty_T(\B^{\frac d2}_{2,1})})||\PP v,\Q v,U||_{L^1_T(\B^{\frac d2+1}_{2,1})}}\Big\{X_d(0)
\\& \quad +C\big(1+\kappa^{-1}X_d(T)\big)(\mu^{-1}+\nu^{-1})\big(Y^2_d(T)+Z^2_d(T)\big)
\\& \quad +C\big(\kappa^{-2}X^2_d(T)+\kappa^{-1}X_d(T)\big)\big(X_d(T)+Y_d(T)+Z_d(T)\big)\Big\}.
\ees\end{align}
By \eqref{lll0} and \eqref{lll}, we can deduce that
\bal\bes\label{lll-1}
(1+||a||_{L^\infty_T(\B^{\frac d2}_{2,1})})||\PP v,\Q v,U,c||_{L^1_T(\B^{\frac d2+1}_{2,1})}&\leq (1+\kappa^{-1}D)(\kappa^{-1}D+\mu^{-1}M+\mu^{-1}\de+\nu^{-1}\de)
\\&\leq 2(1+\mu^{-1}+\nu^{-1})(M+1),
\ees\end{align}
and
\bal\bes\label{lll-2}
(\mu^{-1}+\nu^{-1})||(\Q v,U,B)||^2_{L^1_T(\B^{\frac d2}_{2,1})}&\leq (\mu^{-1}+\nu^{-1})\big(\kappa^{-1}D^2+(\mu^{-1}+\nu^{-1})M^2\big)
\\&\leq (1+\mu^{-2}+\nu^{-2})(M+1)^2.
\ees\end{align}
According to \eqref{lll}, \eqref{ly1} and \eqref{ly2}--\eqref{lll-2}, we have
\bal\label{ll-17}\bes
Y_d(T)&\leq Ce^{C(1+\mu^{-2}+\nu^{-2})(M+1)^2}\big(
\kappa^{-1}D^2+\kappa^{-\frac12}(\mu^{-\frac12}+\nu^{-\frac12})DM
 +\kappa^{-1}\mu^{-1}DM^2
\\&\quad +\kappa^{-1}\mu^{-1}D^2\de\big)
\\&\leq Ce^{C(1+\mu^{-2}+\nu^{-2})(M+1)^2}\big(\kappa^{-1}D^2+\kappa^{-\frac12}D\big),
\ees\end{align}
and
\bal\label{ll-18}\bes
X_d(T)&\leq Ce^{C(1+\mu^{-1}+\nu^{-1})(M+1)}\big(X_d(0)+(\mu^{-1}+\nu^{-1})(1+M^2)+M+1\big)
\\&\leq Ce^{C(1+\mu^{-1}+\nu^{-1})(M+1)^2}\big(X_d(0)+1\big),
\ees\end{align}
for a suitable large (universal) constant $C$. So it is natural to take first
\bal\label{ll-19}
D:=Ce^{C(1+\mu^{-2}+\nu^{-2})(M+1)^2}\big(X_d(0)+1\big),
\end{align}
and then to set
\bal\label{ll-20}
\de=Ce^{2C(1+\mu^{-2}+\nu^{-2})(M+1)^2}\big(\kappa^{-1}D^2+\kappa^{-\frac12}D\big).
\end{align}
for a suitable large (universal) constant $C$. It is easy to prove that $||a(t,\cdot)||_{L^\infty}\leq C\kappa^{-1}D$. Therefore, if we make the assumption that $\kappa$ is large enough such that
$$\kappa^{-1}D\ll1, \quad \de(\frac1\mu+\frac1\nu+1)\leq \frac12, $$
then we deduce from \eqref{ll-17}-\eqref{ll-20} that the desired result \eqref{lll}.

{\bf Proof of Theorem 1.1}\quad  First, Theorem \ref{the1.0} implies that there exists a unique
maximal solution $(a,u,b)$ to \eqref{3.1} which belongs to $\widetilde{\mathcal{C}}([0,T];\dot{B}_{2,1}^{\frac{d}{2}})\times(\widetilde{\mathcal{C}}([0,T];\dot{B}_{2,1}^{\frac{d}{2}-1})\cap L_T^1\dot{B}_{2,1}^{\frac{d}{2}+1})^{2d}$ on some time interval $[0, T^*)$, with the global a priori estimates \eqref{lll0} and \eqref{lll} at our hand, then one conclude that $T^*=+\infty$. In fact, let us assume (by contradiction) that $T^*<\infty$. Next, applying \eqref{lll0} and \eqref{lll} for all $t < T^*$ yields
\begin{eqnarray}\label{Equ4.18}
||a,u,b||_{L^\infty_{T^*}(\B^{\frac d2-1}_{2,1})}&\leq&C<\infty.
\end{eqnarray}
Then, for all $t_0 \in[0, T^*)$, one can solve \eqref{3.1} starting with data $(a_0,u_0,b_0)$ at time $t = t_0$ and get a solution according to Theorem \ref{the1.0} on the interval $[t_0, T + t_0]$ with $T$ independent of $t_0$. Choosing $t_0 > T^*- T$ thus shows that the solution can be continued beyond $T^*$, a contradiction.

\vspace*{1em}
\noindent\textbf{Acknowledgements.} This work was partially supported by NSFC (No. 11361004).


\begin{thebibliography}{99}
\linespread{0}
\bibitem{Bahouri2011} H. Bahouri, J.Y. Chemin, R. Danchin, Fourier Analysis and Nonlinear Partial Differential Equations, Grundlehren Math. Wiss., vol.343, Springer-Verlag, Berlin, Heidelberg, 2011.
\bibitem{Cabannes1970} H. Cabannes, Theoretical Magnetohydrodynamics, Academic Press, New York, 1970.
\bibitem{Danchin 2017} R. Danchin, P. Muchab, Compressible Navier-Stokes system: Large solutions and incompressible limit, Advances in Mathematics, 320 (2017) 904-925.
\bibitem{Landau1960} L. Landau and E. Lifshitz, Electrodynamics of Continuous Media, Pergamon, New York,1960.
\bibitem{Kawashima1984} S. Kawashima, Systems of a hyperbolic-parabolic composite type, with applications to the equations of magnetohydrodynamics (Ph.D. thesis), Kyoto University, 1984.
\bibitem{Strohmer1990} G. Strohmer, About compressible viscous fluid flow is a bounded region, Pacific J. Math. 143 (1990) 359-375.
\bibitem{Vol1972} A.I. Vol'pert, S.I. Khudiaev, On the Cauchy problem for composite systems of nonlinear equations, Mat. Sb. 87 (1972) 504-528.
\bibitem{Danchin 2000} R. Danchin, Global existence in critical spaces for compressible Navier-Stokes equations, Invent. Math., 141 (2000),579-614.
\bibitem{Danchin1 2001}  R. Danchin, Local theory in critical spaces for compressible viscous and heat-conductive gases, Comm. Partial Differential Equations, 26 (2001), 1183-1233.
\bibitem{Danchin2 2001} R. Danchin, Global existence in critical spaces for flows of compressible viscous and heat-conductive gases, Arch.Ration. Mech. Anal., 160 (2001), 1-39.
\bibitem{Danchin2007}R. Danchin, Well-posedness in critical spaces for barotropic viscous fluids with truly not constant density, Comm.Partial Differential Equations, 32 (2007), 1373-1397.
\bibitem{Danchin2016} R. Danchin and L. He, The incompressible limit in $L^p$ type critical spaces, Math. Ann., 64 (2016), 1-38.
\bibitem{Ducomet2006} B. Ducomet, E. Feireisl, The equations of magnetohydrodynamics: On the interaction between matter and radiation in the evolution of gaseous stars, Comm. Math. Phys. 266(2006), 595-629.
\bibitem{Li 2017} F. L, Y. M, D. W, Local well-posedness and low Mach number limit of the compressible magnetohydrodynamic equations in critical spaces, Kinetic Related Models , 10:3,2017, 741-784.
\bibitem{Chen1 2002} G. Chen, D. Wang, Global solution of nonlinear magnetohydrodynamics with large initial data, J. Differential Equations, 182 (2002), 344-376.
\bibitem{Chen2 2003} G. Chen, D. Wang, Existence and continuous dependence of large solutions for the magnetohydrodynamic equations, Z. Angew. Math. Phys., 54 (2003), 608-632.
\bibitem{Fan1 2007} J. Fan, S. Jiang, G. Nakamura, Vanishing shear viscosity limit in the magnetohydrodynamic equations, Commun. Math. Phys., 270(2007), 691-708.
\bibitem{Fan2 2008} J. Fan,  W. Yu, Global variational solutions to the compressible magnetohydrodynamic equations, Nonlinear Anal., 69(2008), 3637-3660.
\bibitem{Fan3 2009} J. Fan,  W. Yu, Strong solution to the compressible MHD equations with vacuum, Nonlinear Anal. Real World Appl., 10(2009), 392-409.
\bibitem{Hu1 2008} X. Hu, D. Wang, Global solutions to the three-dimensional full compressible magnetohydrodynamic flows, Commun. Math. Phys., 283 (2008) 255-284.
\bibitem{Hu2 2010} X. Hu, D. Wang, Global existence and large-time behavior of solutions to the threedimensional equations of compressible magnetohydrodynamic flows, Arch. Ration. Mech. Anal., 197 (2010) 203-238.
\bibitem{Umeda 1984} T. Umeda, S. Kawashima, and Y. Shizuta, On the decay of solutions to the linearized equations of electromagnetofluid dynamics, Japan J. Appl. Math., 1 (1984),435-457.
\bibitem{vol 1972} A. I. Vol'pert; S. I. Khudiaev, On the Cauchy problem for composite systems of nonlinear equations, Mat. Sb., 87(1972), 504-528.
\bibitem{Wang 2010} D. Wang, Large solutions to the initial-boundary value problem for planar magnetohydrodynamics, SIAM J. Appl. Math., 63 (2003), 1424-1441.
\bibitem{Zhang 2010}  J. Zhang, J. Zhao, Some decay estimates of solutions for the 3-D compressible isentropic magnetohydrodynamics, Commun. Math. Sci., 8 (2010), 835-850.
\end{thebibliography}
\end{document}